\definecolor{rojo}{rgb}{1,0,0} % valores de las componentes roja, verde y azul (RGB)
\newtheorem{thm}{Theorem}[section]
\newtheorem{cor}[thm]{Corollary}
\newtheorem{prop}[thm]{Proposition}
\theoremstyle{definition}
\theoremstyle{definition}
\theoremstyle{remark}
\renewcommand {\k} {\Bbbk}
\newcommand {\C} {\mathbb{C}}
\newcommand{\ben}{\begin{equation}}
\newcommand{\een}{\end{equation}}
\newcommand{\bena}{\begin{equation*}}
\newcommand{\eena}{\end{equation*}}
\newcommand{\ZZ}{{\mathcal Z}}
\newcommand{\RR}{\mathcal{R}}
\newcommand{\To}{\longrightarrow}
\newcommand{\Tot}{\longmapsto}
\newcommand{\op}{\operatorname}
\def\RR{\mathbb{R}}
\def\ZZ{\mathbb{Z}}
\newcommand{\al}{\alpha}
\newcommand{\de}{\Delta}
\newcommand{\rt}{\rightarrow}
\newcommand{\mb}{\mathbb}
\newcommand{\var}{\varepsilon}
\title{Equivariant Topological Quantum Field Theory in Dimension 2}
\author{Ana Gonz\'alez\thanks{IMERL, Facultad de Ingenier\'ia, Montevideo, Uruguay} and Carlos Segovia\thanks{Mathematisches Institut, Universit\"at Heidelberg, Germany}\\}
\begin{document}
\maketitle

\begin{quote}\small\small {ABSTRACT. For $G$ a finite group,
we prove in dimension 2 that there is a monoidal equivalence between
the category of $G$-equivariant topological quantum field theories
and the category of $G$-Frobenius algebras, this was proved in
\cite{a10}. This work consists to give, in more detail, a proof of
this result. }
\end{quote}

\section{Introduction}
For $G$ a finite group, the $G$-cobordism category was introduced by
Turaev in \cite{turaev}. A ``\emph{linearization}" of this category
is given by a symmetric monoidal functor to the category of finite
dimensional vector spaces. It so remarkable that, in dimension 2,
this structure is defined completely by the algebraic object given
by a $G$-Frobenius algebra. This result was proved by Moore and
Segal in \cite{a10}, and more precisely this is expressed as
follows.

\begin{thm}\label{main}
In dimension 2, there is a monoidal equivalence between the category
of $G$-equivariant topological quantum field theories and the
category of $G$-Frobenius algebras.
\end{thm}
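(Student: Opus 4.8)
The plan is to establish the equivalence by means of a generators-and-relations presentation of the $G$-cobordism category, mirroring the classical proof that $2$-dimensional TQFTs are the same as commutative Frobenius algebras. First I would recall Turaev's $2$-dimensional $G$-cobordism category: objects are closed oriented $1$-manifolds equipped with a principal $G$-bundle (equivalently, since $G$ is finite, with a chosen holonomy in $G$ on each component, taken up to bundle isomorphism), and morphisms are $G$-cobordisms, i.e. oriented surfaces carrying a $G$-bundle restricting to the prescribed data on the boundary. The key structural input, which I would establish (or import from Turaev and from Moore--Segal) before anything else, is that this symmetric monoidal category is generated under disjoint union and composition by a small set of elementary $G$-cobordisms subject to an explicit list of relations.

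Next, given a $G$-equivariant TQFT $Z$, I would define its associated $G$-Frobenius algebra. Each circle with holonomy $g$ is sent to a vector space $A_g := Z(S^1_g)$, and the underlying object is the $G$-graded space $A = \bigoplus_{g\in G} A_g$. The algebraic operations are read off from the generators: the twisted pair of pants with boundary holonomies $g,h,gh$ produces a multiplication $A_g \otimes A_h \To A_{gh}$; the twisted cap produces the unit $1 \in A_e$; the cocap (reversed disk) produces the trace $\theta \colon A_e \To \k$; and the cylinder carrying holonomy $h$ around its core yields the $G$-action $\varphi_h \colon A_g \To A_{hgh^{-1}}$. I would then verify that each relation in the presentation becomes one axiom of a $G$-Frobenius algebra: associativity and unitality, nondegeneracy of the induced pairing (the Frobenius sewing relation), compatibility of $\varphi_h$ with the grading and multiplication, the action identities $\varphi_e=\mathrm{id}$ and $\varphi_{h_1}\varphi_{h_2}=\varphi_{h_1h_2}$, and the subtler twisted-commutativity and trace conditions that arise from the torus with two commuting holonomies and from moving a handle past a twisted region.

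For the converse direction I would, given a $G$-Frobenius algebra $A$, define a functor on the generators by the same dictionary and check that it respects all the relations, so that it extends to a well-defined symmetric monoidal functor $Z_A$ on the whole category; functoriality on a general $G$-cobordism follows by choosing a $G$-equivariant handle decomposition, expressing the cobordism as a composite of generators, and using the relations to prove independence of the chosen decomposition. I would then show that the two assignments $Z \mapsto A_Z$ and $A \mapsto Z_A$ are mutually inverse up to natural isomorphism, and that they are monoidal, with disjoint union of bundled $1$-manifolds matching the tensor product of $G$-Frobenius algebras. Assembling this yields the claimed monoidal equivalence.

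The main obstacle I anticipate is twofold, and both parts concern the equivariant data. First, obtaining the correct generators-and-relations presentation: one must track the $G$-bundle holonomy through every elementary cobordism, ensure that all constructions descend to isomorphism classes of bundles, and identify the complete list of relations, in particular the new ones encoding the interaction between handles and nontrivial holonomy. Second, verifying the most delicate axioms, namely the twisted-commutativity relation (the analogue of commutativity, now governed by the conjugation action) and the equivariant trace condition, which come from surfaces of higher genus carrying several commuting holonomies and require careful normalization. Once these relations are pinned down, the remaining verifications reduce to the routine translations already familiar from the ungraded case.
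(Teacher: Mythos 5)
Your overall skeleton (a dictionary between elementary $G$-cobordisms and the operations of a $G$-Frobenius algebra, followed by a well-definedness check) is the same as the paper's, but there is a genuine gap at precisely the step you flag as your ``key structural input'': you assume a complete generators-and-relations presentation of the $G$-cobordism category, to be ``established (or imported from Turaev and from Moore--Segal)''. The statement you are proving \emph{is} the Moore--Segal theorem, so importing that presentation is circular, and you offer no method for establishing it yourself. The entire difficulty of the theorem lives in this step: proving that the linear map assigned to a $G$-cobordism does not depend on the chosen decomposition into elementary pieces, which is equivalent to knowing that your list of relations is complete. Saying that independence ``follows by choosing a $G$-equivariant handle decomposition \dots and using the relations'' presupposes the very relations whose sufficiency is in question.

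The paper closes this gap with Cerf theory. Every decomposition of a cobordism $M$ arises from an excellent Morse function $f:M\To\RR$, and any two excellent functions are joined by a path that fails to be excellent at only finitely many points, of exactly two types: a birth point, or two Morse critical points sharing a critical value. This reduces decomposition-independence to finitely many local checks: the birth points give the unit and the trace, while the critical-value exchanges concern cobordisms of Euler characteristic $-2$, namely the types $(1,1,1)$, $(1,0,3)$, $(3,0,1)$ and $(2,0,2)$. For each of these the paper carries out an explicit computation tracking holonomies and invoking the $G$-Frobenius axioms, in particular the torus axiom \eqref{axiom4} for the genus-one case $(1,1,1)$ and the coproduct identity \eqref{coproducto} for $(2,0,2)$. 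There is also holonomy bookkeeping you pass over that must be settled \emph{before} any of this: invariance of the cylinder maps under Dehn twists, which is exactly the condition $\al_g|_{A_g}=1_{A_g}$ of axiom \eqref{axiom1}, and the dependence of the pair-of-pants product on the choice of outgoing base point, which is what forces the twisted commutativity \eqref{multip}. Without the Cerf-theoretic reduction (or an equivalent mechanism making the case analysis finite and explicit), your plan does not close.
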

We finish this section with the definition of the principal two
objects of this work. They are the $G$-cobordism category in
dimension two and the $G$-Frobenius algebras.

The $G$-cobordism category in dimension $n$ has as objects
$(n-1)$-dimensional manifolds equipped with  principal $G$-bundles
and the morphisms are cobordisms with principal $G$-bundles over
them. We recall that a \emph{cobordism} from $\Sigma_0$ to
$\Sigma_1$ is an oriented $n$-manifold $M$ together with maps \bena
\Sigma_0\longrightarrow M \longleftarrow \Sigma_1\eena such that
$\Sigma_0$ maps diffeomorphically onto the in-boundary of $M$, and
$\Sigma_1$ maps diffeomorphically onto the out-boundary of $M$. Two
cobordisms from $\Sigma_0$ to $\Sigma_1$ are called
\emph{equivalent} if there is a diffeomorphism which commute the
diagram \bena\label{sense} \xymatrix{ &M'& \\ \Sigma_0\ar[ru]\ar[rd]
&&\Sigma_1\ar[ld]\ar[lu]\\ & M\ar[uu]^\cong & } \eena Similarly, we
can consider an identification in the cobordisms with principal
$G$-bundles considering $G$-equivariant diffeomorphisms.

A $G$-Frobenius algebra is a graded algebra $A=\oplus_{g\in G}A_g$,
where $A_g$ is a vector space of finite dimension, for all $g\in G$
such that
\begin{enumerate}
\item\label{axiom1} There is a homomorphism $\alpha:G\rt\op{Aut}(A)$,
with $\al_h:A_g\rt A_{hgh^{-1}}$, where $\op{Aut}(A)$ is the algebra of automorphisms of $A$. Moreover, for every $g\in G$ we have
\bena\al_g|_{A_g}=1_{A_g}\,.\eena
Note that $\al_e:A_g\rt A_g$ is the identity map.
\item\label{axiom2} There is a $G$-invariant trace $\var:A_e\rt \C$,
which induce non-degenerate pairings \bena\theta_g:A_g\otimes
A_{g^{-1}}\rt\C\,,\;\mbox{for all}\; g\in G.\eena
\item\label{axiom3} For all $\phi\in A_g$ and $\phi'\in A_h$ we have that the product is twisted commutative, i.e.
\bena \phi\phi'=\al_g\bigl(\phi'\bigr)\phi\,.\eena
\item\label{axiom4} Let $\Delta_g=\sum_i\xi_i^g\otimes\xi_i^{g^{-1}}\in A_g\otimes A_{g^{-1}}$ be the Euler element,
where $\left\{\xi_i^g\right\}$ is a base
of $A_g$ and $\left\{\xi_i^{g^{-1}}\right\}$ is the dual base in $A_{g^{-1}}$. We have, for all $g,h\in G$, that
\bena\sum_i\al_h\bigl(\xi_i^g\bigr)\xi_i^{g^{-1}}=\sum_i\xi_i^h\al_g\left(\xi_i^{h^{-1}}\right)\,.\eena
\end{enumerate}

\section{Algebraic data}
This section is devoted mainly to prove two facts about
$G$-Frobenius algebras. The first is Theorem \ref{teoabramsequiv}
which gives an equivalent way of see when a Frobenius trace is
non-degenerate. This result, for Frobenius algebras, originally
appears in \cite{lawvere} and it is proved in \cite{abar}. It is
presented in the general case in \cite{ana}. The second consists in
showing that the $G$-invariant part of a $G$-Frobenius algebra is in
fact a Frobenius algebra (see \cite{ana}).

\begin{thm}
\label{teoabramsequiv} Let $A=\oplus_{g\in G} A_g$ be a graded
algebra with twisted commutative products $m_{g,h}:A_g\otimes A_h\rt
A_{gh}$, unit $u:\C\rt A_e$ and an action $\alpha:G\To\op{Aut}(A)$,
where $A_g$ is a finite dimensional vector space, for all $g\in G$.
Then, there is a Frobenius trace $\varepsilon:A_e\To\C$ if and only
if there are  twisted cocommutative coproducts $\de_{g,h} :A_{gh}\rt
A_{g}\otimes A_{h}$, with trace $\varepsilon$, such that for every
$g,h,k\in G$ we have the commutativity of the following diagram
\ben\label{abram}\xymatrix@C=1.5cm@R1cm{ A_g\otimes A_{hk} \ar[r]^{m_{g,hk}}\ar[d]_{1\otimes\de_{h,k}} & A_{ghk}\ar[d]^{\de_{gh,k}}\\
A_g\otimes A_h\otimes A_k\ar[r]_{m_{g,h}\otimes 1}&A_{gh}\otimes A_k } \,.\een
\end{thm}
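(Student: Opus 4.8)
The plan is to prove both implications by passing through the nondegenerate pairings $\theta_g$, which serve as the bridge between the trace $\var$ and the coproducts $\de_{g,h}$. Throughout I fix for each $g$ a basis $\{\xi_i^g\}$ of $A_g$ and its $\theta_g$-dual basis $\{\xi_i^{g^{-1}}\}$ of $A_{g^{-1}}$, so that $\var(\xi_i^g\xi_j^{g^{-1}})=\delta_{ij}$. The single computational engine of the whole argument is the \emph{reproducing property}
\[\sum_i\xi_i^g\,\var\bigl(\xi_i^{g^{-1}}X\bigr)=X\qquad(X\in A_g),\]
which follows from the definition of the dual basis together with the cyclicity $\var(ab)=\var(ba)$ of the invariant trace on $A_e$.

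For the direction $(\Rightarrow)$, assume $\var$ is a Frobenius trace, so each $\theta_g$ is nondegenerate, and set
\[\de_{g,h}(a)=\sum_i\xi_i^g\otimes\bigl(\xi_i^{g^{-1}}a\bigr)\qquad(a\in A_{gh}).\]
First I would check that this is independent of the chosen basis (equivalently, that $\de_{g,h}$ is adjoint to multiplication under the pairings), and that $\var$ is a counit and the $\de_{g,h}$ are twisted cocommutative; these use only cyclicity of $\var$ and the twisted commutativity axiom $\phi\phi'=\al_g(\phi')\phi$. The commutativity of diagram (\ref{abram}) is then verified by pairing the second tensor factor of each route against an arbitrary $\eta\in A_{k^{-1}}$: chasing $\phi\otimes\psi\in A_g\otimes A_{hk}$ along the top gives $\sum_i\xi_i^{gh}\otimes(\xi_i^{(gh)^{-1}}\phi\psi)$ and along the bottom gives $\sum_j(\phi\,\xi_j^h)\otimes(\xi_j^{h^{-1}}\psi)$, and after pairing both reduce, by associativity of $m$ and one application of the reproducing property, to the single element $\phi\psi\eta\in A_{gh}$. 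Nondegeneracy of $\theta_k$ then forces the two tensors to coincide.

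For the direction $(\Leftarrow)$, assume the twisted cocommutative coproducts $\de_{g,h}$ with trace $\var$ satisfying (\ref{abram}) are given. I would define the copairing $\Delta_g=\de_{g,g^{-1}}(u(1))\in A_g\otimes A_{g^{-1}}$ and show that $\theta_g(\phi\otimes\psi)=\var(\phi\psi)$ and $\Delta_g$ satisfy the two triangle (zig-zag) identities. Specializing (\ref{abram}) to $h=g^{-1}$, $k=g$ and feeding in the unit axiom for $u$ together with the counit property of $\var$ collapses the composite $A_g\to A_g\otimes A_{g^{-1}}\otimes A_g\to A_g$ to the identity; this exhibits $A_{g^{-1}}$, via $\theta_g$, as a dual of $A_g$, and hence $\theta_g$ is nondegenerate. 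Thus $\var$ is a Frobenius trace. A final check that the assignments $\var\mapsto\de$ and $\de\mapsto\var$ are mutually inverse makes the correspondence canonical.

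The main obstacle I anticipate is bookkeeping rather than conceptual: one must keep the grading and the $\al$-twist aligned at every step, and in particular confirm that ``twisted cocommutativity'' is exactly the mirror of the twisted commutativity axiom. The one genuine subtlety is that (\ref{abram}) supplies only \emph{one} of the two bimodule-map forms of the Frobenius relation, whereas the zig-zag computation in $(\Leftarrow)$ wants both; I therefore expect to first derive the second (``$\de$ on the left'') form by combining (\ref{abram}) with twisted (co)commutativity before the duality argument closes.
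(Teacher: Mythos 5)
Your proposal is correct, and at its core it is the same proof as the paper's: both directions hinge on the observation that the coproduct must be the adjoint of multiplication under the pairings $\theta_g$ induced by $\var$. Indeed, your defining formula $\de_{g,h}(a)=\sum_i\xi_i^g\otimes\xi_i^{g^{-1}}a$ is literally the expression the paper only derives afterwards as its Corollary (equation \eqref{coproducto}); the paper instead defines $\de_{g,h}$ abstractly as $\tau\circ\bigl(\Phi_h^{-1}\otimes\Phi_g^{-1}\bigr)\circ m_{h^{-1},g^{-1}}^*\circ\Phi_{gh}$, where $\Phi_g:A_g\To A_{g^{-1}}^*$ is the isomorphism given by the trace, and then verifies \eqref{abram} by embedding it in a larger diagram built from the auxiliary maps $\overline{m}_{g,h^{-1}}$. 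Your element-level verification (pair the $A_k$-factor against an arbitrary $\eta\in A_{k^{-1}}$, reduce both routes to $\phi\psi\eta$ by the reproducing property, then invoke nondegeneracy of $\theta_k$) is a legitimate and arguably cleaner substitute for that diagram chase, though you should record explicitly that the cyclicity $\var(ab)=\var(ba)$ for $a\in A_g$, $b\in A_{g^{-1}}$ is not free: it follows from twisted commutativity together with the fact that $\al_g$ restricts to the identity on $A_{g^{-1}}$ (which holds because $\al_g=(\al_{g^{-1}})^{-1}$ and $\al_{g^{-1}}$ is the identity on $A_{g^{-1}}$ by the action axiom). The main added value of your write-up is the $(\Leftarrow)$ direction: the paper dismisses it by citing \cite{abar} and details only the necessity, whereas you give an actual argument (specialize \eqref{abram} at $(g,g^{-1},g)$, feed in the unit and the counit property, obtain the snake identity, conclude nondegeneracy). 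Note finally that the ``genuine subtlety'' you anticipate, needing both bimodule forms of the Frobenius relation, is not actually an obstruction: one snake identity per group element already gives injectivity of $A_g\To A_{g^{-1}}^*$ for every $g$, and finite dimensionality then forces all these maps to be isomorphisms, so each $\theta_g$ is nondegenerate in both slots without deriving the second form.
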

\begin{proof}  We just give the essential changes added to the proof given in \cite{abar}. The non-trivial part is the necessity, where
we take as coproducts the following,
\ben
\xymatrix@C=2cm@R1cm{
A_{gh}\ar[rr]^{\Delta_{g,h}}\ar[d]_{\Phi_{gh}}&&A_g\otimes A_h\\
A_{h^{-1}g^{-1}}^*\ar[r]_{{m_{h^{-1},g^{-1}}}^*}&A_{h^{-1}}^*\otimes
A_{g^{-1}}^*\ar[r]_{\Phi_h^{-1}\otimes\Phi_g^{-1}}&A_{h}\otimes
A_{g}\ar[u]_{\tau}
}\een
where $\Phi_g:A_g\To A_{g^{-1}}^*$ is the isomorphism defined by the
non-degenerate trace $\varepsilon:A_e\To\C$. The twisted
cocommutativity of the coproduct is a consequence of the following
two commutative diagrams.
As the product is twisted commutative, then we have that the next diagram commute
\bena
\xymatrix@C=2cm{
A_h\otimes A_g\ar[r]^{\al_g\otimes 1}\ar[d]_{\Phi_h\otimes \Phi_g}& A_{ghg^{-1}}\otimes A_g\ar[d]^{\Phi_{ghg^{-1}}\otimes \Phi_g}\\
A_{h^{-1}}^*\otimes A_{g^{-1}}^*\ar[r]_{\al_g^*\otimes 1}& A_{gh^{-1}g^{-1}}^*\otimes A_{g^{-1}}^*\,,
}
\eena
We deduce that
\bena
\xymatrix@C=2cm{
A_{gh}\ar[d]_{\Delta_{g,h}}\ar[r]^{\Delta_{ghg^{-1},g}}& A_{ghg^{-1}}\otimes A_g\\
A_g\otimes A_h\ar[r]_{\tau}&A_h\otimes A_g\ar[u]_{\al_g\otimes 1}
}
\eena commutes, then the coproduct is twisted cocommutative.
%\bena
%\xymatrix@C=1.5cm{A_{gh}\ar[r]^{\Phi_{gh}}\ar[d]_{\Phi_{gh}}&A_{h^{-1}g^{-1}}^*\ar[rr]^{m_{g^{-1},gh^{-1}g^{-1}}^*}&&A_{g^{-1}}^*\otimes A_{gh^{-1}g^{-1}}^*\ar[r]^{1\otimes \alpha_g^*}& A_{g^{-1}}^*\otimes A_{h^{-1}}^*\ar[dd]^{\Phi_g^{-1}\otimes\Phi_{h^{-1}}}\\
%A_{h^{-1}g^{-1}}^*\ar[d]_{m_{h^{-1},g^{-1}}^*}\ar[ru]_{1}&&&&\\
%A_{h^{-1}}^*\otimes A_{g^{-1}}^*\ar[rrr]_{\Phi_h^{-1}\otimes\Phi_g^{-1}}\ar[rrrruu]_\tau& &&A_h\otimes A_g\ar[r]_\tau&A_g\otimes A_h}
%\eena
%and
%\bena
%\xymatrix{
%A_{ghg^{-1}}\otimes A_g\ar[rr]^{\alpha_{g^{-1}}\otimes 1}\ar[d]_{\Phi_{ghg^{-1}}\otimes \Phi_g} && A_h\otimes A_g \ar[d]^{\Phi_h\otimes \Phi_g} \\
%A_{gh^{-1}g^{-1}}^*\otimes A_{g^{-1}}^*\ar[rr]_{\alpha_g^*\otimes 1}&& A_{h^{-1}}^*\otimes A_{g^{-1}}^*}\,.
%\eena

The coassociativity of the coproduct is an immediately  consequence of the associativity of the product.

%\bena\scalebox{.9}{\xymatrix{A_{ghk}\ar[r]^{\Phi_{ghk}}\ar[d]^{\Phi_{ghk}}& A_{k^{-1}h^{-1}g^{-1}}^*\ar[r]^{m_{k^{-1}h^{-1},g^{-1}}^*} & A_{k^{-1}h^{-1}}^*\otimes A_{g^{-1}}^*\ar[r]^\tau\ar[ddd]^{m_{k^{-1},h^{-1}}^*\otimes 1}& A_{g^{-1}}^*\otimes A_{k^{-1}h^{-1}}^*\ar[r]^{\Phi_g^{-1}\otimes\Phi_{hk}^{-1}}\ar[rd]_{\Phi_g^{-1}\otimes 1} & A_g\otimes A_{hk}\ar[d]_{1\otimes \Phi_{hk}} \\
%A_{k^{-1}h^{-1}g^{-1}}^*\ar[ru]_{1}\ar[dd]^{m_{k^{-1},h^{-1}g^{-1}}^*}&&&& A_g\otimes A_{k^{-1}h^{-1}}^*\ar[d]_{1\otimes m_{k^{-1},h^{-1}}^*}\\
%&&&&A_g\otimes A_{k^{-1}}^*\otimes A_{h^{-1}}^*\ar[d]_{1\otimes\tau}\\
%A_{k^{-1}}^*\otimes A_{h^{-1}g^{-1}}^*\ar[rr]_{1\otimes m_{h^{-1},g^{-1}}^*}\ar[d]^\tau&& A_{k^{-1}}^*\otimes A_{h^{-1}}^*\otimes A_{g^{-1}}^*\ar[d]_{(13)}\ar[r]^{1\otimes 1\otimes \Phi_g^{-1}}& A_{k^{-1}}^*\otimes A_{h^{-1}}^*\otimes A_g\ar[r]^{(13)} &A_g\otimes A_{h^{-1}}^*\otimes A_{k^{-1}}^*\ar[dd]_{1\otimes \Phi_h^{-1}\otimes\Phi_k^{-1}}\\
%A_{h^{-1}g^{-1}}^*\otimes A_{k^{-1}}^*\ar[d]^{\Phi_{gh}^{-1}\otimes\Phi_k^{-1}}\ar[rd]^{1\otimes \Phi_k^{-1}}&& A_{g^{-1}}^*\otimes A_{h^{-1}}^*\otimes A_{k^{-1}}^*\ar[rd]_{1\otimes 1\otimes \Phi_k^{-1}}&&\\
%A_{gh}\otimes A_k\ar[r]_{\Phi_{gh}\otimes 1}& A_{h^{-1}g^{-1}}^*\otimes A_k\ar[r]^{m_{h^{-1},g^{-1}}^*\otimes 1}& A_{h^{-1}}^*\otimes A_{g^{-1}}^*\otimes A_k\ar[r]_{\tau\otimes 1}& A_{g^{-1}}^*\otimes A_{h^{-1}}^*\otimes A_k\ar[r]^{\Phi_g^{-1}\otimes\Phi_h^{-1}\otimes 1} &A_g\otimes A_h\otimes A_k}
%}\eena
%where we use that the product is associative and that the maps $\Phi_g$ have an $A$-module property.
Now, to prove the commutativity of diagram \eqref{abram} we define the map
\bena
\xymatrix@C=2cm{A_{gh}\ar[rrr]^{\overline{m}_{g,h^{-1}}}\ar[d]_{\Phi_{gh}}&&&A_g\otimes A_{h^{-1}}^*\\
A_{h^{-1}g^{-1}}^*\ar[rr]_{m_{h^{-1},g^{-1}}^*}&&
A_{h^{-1}}^*\otimes A_{g^{-1}}^*\ar[r]_{1\otimes\Phi_g^{-1}}
&A_{h^{-1}}^*\otimes A_{g}\ar[u]_{\tau}}\,, \eena and the diagram
\eqref{abram} is inside the following \ben\label{otro} \xymatrix{&&
A_g\otimes A_{hk}\ar[lld]_{1\otimes
\overline{m}_{h,k^{-1}}}\ar[rr]^{m_{g,hk}}\ar[d]^{1\otimes
\Delta_{h,k}}&&A_{ghk}\ar[rrd]^{\overline{m}_{gh,k^{-1}}}\ar[d]^{\Delta_{gh,k}}&&\\A_g\otimes
A_h\otimes A_{k^{-1}}^*\ar[rr]_{1\otimes 1\otimes
\Phi_k^{-1}}\ar[rrd]_{m_{g,h}\otimes 1}&&A_g\otimes A_h\otimes
A_k\ar[rr]_{m_{g,h}\otimes 1}&&A_{gh}\otimes A_k\ar[d]_{1\otimes
\Phi_k}&&A_{gh}\otimes
A_{k^{-1}}^*\ar[ll]^{1\otimes\Phi_k^{-1}}\ar[lld]^{1\otimes 1}\\&&
A_{gh}\otimes A_{k^{-1}}^*\ar[rr]_{1\otimes 1}&& A_{gh}\otimes
A_{k^{-1}}^*&&} \een The commutativity of the other diagrams is a
natural consequence of the definition of $\overline{m}_{h,k^{-1}}$,
so \eqref{abram} must commute.
\end{proof}

The preview theorem implies a property which occur in the coproducts $\Delta_{g,h}$. This is a compatibility between the left and right module structure of a $G$-Frobenius algebra.

\begin{cor}
For a basis $\bigl\{\xi_i^k\bigr\}$ of $A_k$ and $\bigl\{\xi_i^{k^{-1}}\bigr\}$ its dual basis in $A_{k^{-1}}$, we have
\begin{equation}\label{coproducto}\Delta_{g,h}(\phi)=\sum\phi \xi_i^{h^{-1}}\otimes\xi_i^h=\sum
\xi_i^g\otimes\xi_i^{g^{-1}}\phi\,.
\end{equation}
\end{cor}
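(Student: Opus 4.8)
The plan is to compute $\Delta_{g,h}(\phi)$ directly from the definition recorded in the proof of Theorem~\ref{teoabramsequiv}, namely $\Delta_{g,h}=\tau\circ(\Phi_h^{-1}\otimes\Phi_g^{-1})\circ m_{h^{-1},g^{-1}}^*\circ\Phi_{gh}$, and then to match the outcome against the two claimed expressions. First I would record that, by construction, the isomorphism $\Phi_g$ satisfies $\Phi_g(a)(b)=\varepsilon(ab)$ for $a\in A_g$ and $b\in A_{g^{-1}}$; hence $\Phi_g$ carries the basis $\{\xi_i^g\}$ to the basis of $A_{g^{-1}}^*$ dual to $\{\xi_i^{g^{-1}}\}$, and $\Phi_g^{-1}$ carries that dual basis back to $\{\xi_i^g\}$. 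This is the only structural input needed to run the four arrows on a vector.

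For $\phi\in A_{gh}$, evaluating the functional $m_{h^{-1},g^{-1}}^*(\Phi_{gh}(\phi))$ on the basis vectors $\xi_i^{h^{-1}}\otimes\xi_j^{g^{-1}}$ produces the scalars $\varepsilon(\phi\,\xi_i^{h^{-1}}\xi_j^{g^{-1}})$, and applying $\Phi_h^{-1}\otimes\Phi_g^{-1}$ followed by $\tau$ gives
\[
\Delta_{g,h}(\phi)=\sum_{i,j}\varepsilon\bigl(\phi\,\xi_i^{h^{-1}}\xi_j^{g^{-1}}\bigr)\,\xi_j^g\otimes\xi_i^h.
\]
Since $\phi\,\xi_i^{h^{-1}}\in A_g$ and the coefficient of $\xi_j^g$ in its expansion is exactly $\varepsilon(\phi\,\xi_i^{h^{-1}}\xi_j^{g^{-1}})$, the inner sum over $j$ collapses to $\phi\,\xi_i^{h^{-1}}$, which yields the first equality $\Delta_{g,h}(\phi)=\sum_i\phi\,\xi_i^{h^{-1}}\otimes\xi_i^h$. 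This step is pure bookkeeping with dual bases and uses nothing beyond non-degeneracy.

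The second equality is where the real content sits, and I expect it to be the main obstacle, since it requires the cyclicity $\varepsilon(ab)=\varepsilon(ba)$ for $a\in A_g$, $b\in A_{g^{-1}}$. This is precisely the point where axioms \eqref{axiom1}, \eqref{axiom2} and \eqref{axiom3} must be used together: by twisted commutativity $ab=\alpha_g(b)a$, while $G$-invariance of $\varepsilon$ combined with $\alpha_g|_{A_g}=1_{A_g}$ gives $\varepsilon(ba)=\varepsilon(\alpha_g(ba))=\varepsilon(\alpha_g(b)\alpha_g(a))=\varepsilon(\alpha_g(b)a)$, so the two scalars coincide. Applying this with $a=\phi\,\xi_i^{h^{-1}}$ and $b=\xi_j^{g^{-1}}$ rewrites each coefficient above as $\varepsilon(\xi_j^{g^{-1}}\phi\,\xi_i^{h^{-1}})$.

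Finally I would read the resulting double sum from the other side: the coefficient of $\xi_i^h$ in the expansion of $\xi_j^{g^{-1}}\phi\in A_h$ is $\varepsilon(\xi_j^{g^{-1}}\phi\,\xi_i^{h^{-1}})$, so now the sum over $i$ collapses to $\xi_j^{g^{-1}}\phi$, giving $\Delta_{g,h}(\phi)=\sum_j\xi_j^g\otimes\xi_j^{g^{-1}}\phi$, which is the second equality. The only routine point to verify is that every product lands in the intended graded piece, that is $\phi\,\xi_i^{h^{-1}}\in A_g$, $\xi_j^{g^{-1}}\phi\in A_h$, and both triple products lie in $A_e$ so that $\varepsilon$ applies; all of this is immediate from the grading of the multiplication.
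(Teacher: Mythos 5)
Your proof is correct, but its execution differs from the paper's. The paper stays at the level of maps: it introduces a second auxiliary morphism $\overline{\overline{m}}_{g^{-1},h}$ (the left-handed analogue of the $\overline{m}_{g,h^{-1}}$ from the proof of Theorem \ref{teoabramsequiv}), notes that the coproduct factors in two ways, $\Delta_{g,h}=\bigl(1\otimes\Phi_h^{-1}\bigr)\overline{m}_{g,h^{-1}}=\bigl(\Phi_g^{-1}\otimes 1\bigr)\overline{\overline{m}}_{g^{-1},h}$, and reads off the two expressions in \eqref{coproducto} from these factorizations, deferring the verification to ``a diagram analogous to \eqref{otro}.'' You instead unwind the same definition of $\Delta_{g,h}$ on elements, obtaining $\Delta_{g,h}(\phi)=\sum_{i,j}\varepsilon\bigl(\phi\,\xi_i^{h^{-1}}\xi_j^{g^{-1}}\bigr)\,\xi_j^g\otimes\xi_i^h$, and collapse the double sum in the two possible orders. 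The added value of your route is that it isolates the ingredient the paper never states explicitly: the symmetry $\varepsilon(ab)=\varepsilon(ba)$ for $a\in A_g$, $b\in A_{g^{-1}}$, which you correctly derive from twisted commutativity (axiom \eqref{axiom3}), $G$-invariance of the trace (axiom \eqref{axiom2}), and $\al_g|_{A_g}=1_{A_g}$ (axiom \eqref{axiom1}). That cyclicity is exactly what makes the paper's second factorization yield $\sum\xi_i^g\otimes\xi_i^{g^{-1}}\phi$ rather than an expression involving the action $\al$, so your computation can be viewed as supplying the commutativity of the paper's ``analogous diagram.'' What the paper's approach buys is brevity and a coordinate-free formulation reusable in the diagram chases of Section \ref{section2}; what yours buys is a self-contained, fully checkable argument in which the role of each axiom is visible.
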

\begin{proof}
Similarly as before, we can define the map $\overline{\overline{m}}_{g^{-1},h}$ as
\bena
\xymatrix@C=2cm{A_{gh}\ar[rr]^{\overline{\overline{m}}_{g^{-1},h}}\ar[d]_{\Phi_{gh}}& &A_{g^{-1}}^*\otimes A_h\\ A_{h^{-1}g^{-1}}\ar[r]_{m_{h^{-1},g^{-1}}^*} & A_{h^{-1}}^*\otimes A_{g^{-1}}^*\ar[r]_{\Phi_h^{-1}\otimes 1}& A_{h}\otimes A_{g^{-1}}^*\ar[u]_{\tau}}\,.
\eena
We can make a diagram analogous to \eqref{otro}. Since the coproduct is written in two ways $$\Delta_{g,h}=\bigl(1\otimes\Phi_h^{-1}\bigr)\overline{m}_{g,h^{-1}}= \bigl(\Phi_g^{-1}\otimes 1\bigr)\overline{\overline{m}}_{g^{-1},h},$$ we have the identity \eqref{coproducto}.
\end{proof}

Finally, we end this section with the proof that the stuff of this article generalizes the usual theory of Frobenius algebras. This result appear in \cite{ana}. There are some concrete examples of this structure provided by some cohomology theories associated to orbifolds, see \cite{art,chruan} and by $G$-string topology, see \cite{luxion1,art}.

\begin{prop}\label{orb}
If $A$ is a $G$-Frobenius algebra we have that the $G$-invariant part of this algebra, denoted by $A_{orb}$, is a Frobenius algebra.
\end{prop}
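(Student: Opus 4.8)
The plan is to realize $A_{orb}$ as the subspace $A^G$ of $\al$-invariants, i.e. the image of the averaging operator $P=\tfrac{1}{|G|}\sum_{k\in G}\al_k$; concretely, $a=\sum_{x\in G}a_x$ with $a_x\in A_x$ belongs to $A_{orb}$ precisely when $\al_k(a_x)=a_{kxk^{-1}}$ for all $k,x\in G$. First I would verify that $A_{orb}$ is a unital subalgebra: because every $\al_k$ is an algebra automorphism, the product of two invariants is again invariant, and the unit $1\in A_e$ is invariant since $\al_k(1)=1$ and $\al_k$ preserves $A_e$. Thus the restricted product makes $A_{orb}$ an associative unital algebra.

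Next I would show that invariant elements are central, so in particular $A_{orb}$ is commutative. For arbitrary $a\in A$ and $b\in A_{orb}$ I compare, for each $w\in G$, the $w$-graded components of $ab$ and $ba$. The $w$-component of $ab$ is $\sum_{g\in G}a_g b_{g^{-1}w}$; applying the twisted commutativity of Axiom \ref{axiom3} gives $a_g b_{g^{-1}w}=\al_g(b_{g^{-1}w})a_g$, and the invariance relation $\al_g(b_{g^{-1}w})=b_{wg^{-1}}$ rewrites this as $\sum_{g\in G}b_{wg^{-1}}a_g$, which is exactly the $w$-component of $ba$. Hence $ab=ba$, and $A_{orb}$ is commutative.

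Finally I would equip $A_{orb}$ with the trace $\theta:=\var\circ\pi_e$, where $\pi_e$ is the projection onto $A_e$; its bilinear form is $(a,b)\mapsto\theta(ab)=\sum_{x\in G}\theta_x(a_x\otimes b_{x^{-1}})$, built from the non-degenerate pairings of Axiom \ref{axiom2}, and it is associative because the product is. The one genuine difficulty is non-degeneracy on the subspace $A_{orb}$, since restricting a non-degenerate form to a subspace generally destroys non-degeneracy. I would overcome this by noting that the global form $\langle a,b\rangle=\sum_{x\in G}\theta_x(a_x\otimes b_{x^{-1}})$ on all of $A$ is non-degenerate, being a direct sum of the non-degenerate blocks $\theta_x$, and is $G$-invariant, since each $\al_k$ is an automorphism fixing $A_e$ setwise and $\var$ is $G$-invariant. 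Then for $v\in A_{orb}$ and any $w\in A$ the invariance yields $\langle v,w\rangle=\tfrac{1}{|G|}\sum_{k\in G}\langle \al_k v,\al_k w\rangle=\langle v,Pw\rangle$ with $Pw\in A_{orb}$; since we work over $\C$ the projector $P$ exists, so if $v$ pairs trivially with all of $A_{orb}$ it pairs trivially with all of $A$, forcing $v=0$. This establishes non-degeneracy of the restricted form and shows $(A_{orb},\theta)$ is a commutative Frobenius algebra.
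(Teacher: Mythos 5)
Your proof is correct, but the crucial step---non-degeneracy of the restricted pairing---is handled by a genuinely different argument than the paper's. The paper first identifies $A_{orb}\cong\oplus_{g\in T}A_g^{C(g)}$ ($T$ a set of conjugacy class representatives, $C(g)$ the centralizer) and then, given an invariant $\phi$ orthogonal to $A_{orb}$, builds for each representative $h$ an explicit invariant test element $\phi'=\sum_{k\in[h],\,k=lhl^{-1}}\al_l(\phi'_h)$ by summing conjugates of a single component $\phi'_h\in A_h$; the computation $\var(\phi\phi')=|[h]|\,\var(\phi_{h^{-1}}\phi'_h)$ then reduces the claim to non-degeneracy of $\theta_{h^{-1}}$. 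You instead observe that the global form $\langle a,b\rangle=\var\bigl(\pi_e(ab)\bigr)$ on all of $A$ is non-degenerate and $G$-invariant, and use the averaging projector $P=\tfrac{1}{|G|}\sum_{k\in G}\al_k$ to show that an invariant element orthogonal to $A_{orb}$ is orthogonal to all of $A$. What the paper's route buys is the explicit decomposition of $A_{orb}$ into centralizer invariants, which is the form in which this algebra appears in orbifold cohomology examples; what your route buys is brevity and, in fact, more rigor: the paper's test element lies in $A_{orb}$ only when $\phi'_h$ is $C(h)$-invariant (otherwise the sum over coset representatives is not $G$-invariant), so its conclusion ``$\var(\phi_{h^{-1}}\phi'_h)=0$ for all $\phi'_h\in A_h$'' implicitly needs exactly the kind of averaging you perform, or a sum over all $l\in G$ rather than over coset representatives. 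Both arguments rely on working over $\C$ (invertibility of $|G|$), and your centrality computation, showing invariants commute with all of $A$, matches the paper's componentwise calculation.
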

\begin{proof}
 Set $A_{orb}:=A^G=\left(\oplus_{g\in G}A_g\right)^G$. Note that $\displaystyle{A_{orb}\cong\oplus_{g\in T}A_g^{C(g)}}$ where $T$ is a set of representatives for the conjugacy classes in $G$ and $C(g)$ is the centralizer of $g\in G$. The maps are
$$\Psi:\bigoplus_{g\in T}A_g^{C(g)}\rt\left(\bigoplus_{b\in G}A_g\right)^G\quad\mbox{defined by}\quad \sum_{g\in G}\phi'_g\mapsto\sum_{g\in T}\sum_{h\in[g], h=kgk^{-1}}\al_k(\phi'_g)$$
 and
$$\Upsilon:\left(\bigoplus_{b\in G} A_g\right)^G\rt \bigoplus_{g\in T} A_g^{C(g)}\quad\mbox{defined by}\quad \sum_{g\in G}\phi_g\mapsto\sum_{g\in T}\phi_g.$$
First, we prove that $A_{orb}$ is an algebra. The product is  the restriction of the product in $A$ because, for $\phi,\phi'\in A_{orb}$, we have that $g\cdot \phi=\al_g(\phi)=\phi$ and $g\cdot \phi'=\al_g(\phi')=\phi'$, for all $g\in G$. Then $g\cdot\phi\phi'=\al_g(\phi\phi')=\al_g(\phi)\al_g(\phi')=\phi\phi'$. An additional property is the commutativity of the product, for this we take $\phi=\sum_{g\in G}\phi_g$ and $\phi'=\sum_{h\in G}\phi'_h\in A_{orb}$, then
$$\phi\phi'=\sum_{g\in G}\sum_{h\in G}\phi_g\phi'_h=\sum_{g,h\in G}\al_g(\phi'_h)\phi_g
=\sum_{g\in G}\al_g\left(\sum_{h\in G}\phi'_h\right)\phi_g=\sum_{g\in G}\phi'\phi_g=\phi'\phi\,.$$\\
For the Frobenius structure we define the trace $\var:A_{orb}\rt \mb{C}$ as the restriction of the trace $\var:A\rt \mb{C}$, which is zero on $A_g$ when $g\not =e$.\\
 To complete the prove we need to see that the induced pairing is nondegenerate. We take $\phi=\sum_{g\in G}\phi_g\in A_{orb}$ and we suppose $\var(\phi\phi')=0$,
for all $\phi'\in A_{orb}$. We well prove that $\phi=0$. If we prove that $\phi_g=0$ for all $g\in T$ we finish, because $\phi=\sum_{g\in
T}\sum_{h\in [g], h=kgk^{-1}}\al_k(\phi_g)$. We can consider $\phi'_h\in A_h$, where $h$ is the representant of $[h]\in T$, then
$\phi'=\sum_{k\in [h], k=lhl^{-1}}\al_l(\phi'_h)\in A_{orb}$.
Now
$$\var(\phi\phi')=|[h]|\var(\phi_{h^{-1}}(\phi'_h)).$$
 Hence $\var(\phi_{h^{-1}}\phi'_h)=0$, for all $\phi'_h\in A_h$. Then $\phi_{h^{-1}}=0$ for every $h\in T$, therefore $\phi=0$.
\end{proof}

\section{Proof of the main theorem}
\label{section2}
The non-trivial part of theorem \ref{main} consists that the algebraic data given by a $G$-Frobenius algebra is enough to cut every $G$-cobordism in all possible way. For this we need a description of the elementary components of the $G$-cobordism category in dimension 2. For the objects we take a fixed circle, and every principal $G$-bundle is described by taking based points in the base and the total space and a based projection. Thus, they are exactly the elements of the group, where the bijection is given by the lifting of the base space starting in the base point of the total space. For $g\in G$ we denote by $P_g$ the total space of the principal $G$-bundle associated. For the morphisms we do not have an explicit description, but we can described the elemental components:
\begin{enumerate}
\item for $g,h\in G$, the morphisms from $g$ to $h$ (with base space the cylinder) are in one-to-one correspondence with the elements of the set $\{k:h=kgk^{-1}\}$ up to the identification\footnote{This identification is given by the action of the Dehn twists.} $k\sim h^nk g^m$,
where $n,m\in\ZZ$. A typical element is
\begin{equation*}\label{cyl}\includegraphics{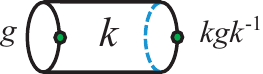}\,.\end{equation*}
 This correspondence is given by means of the homotopy lifting property applied to the base space, a cylinder, with starting point $P_g$;

\item since the pair of pants has the homotopy type of $S^1\vee S^1$, we can describe the principal $G$-bundles over the circle as follows. We start with the principal $G$-bundles over the \emph{thin} pair of pants $S^1\vee S^1$. They are in bijection with the group homomorphisms from the fundamental group of $S^1\vee S^1$ to $G$, thus with $G\times G$. A basic element is
\begin{equation*}\label{pants}\includegraphics{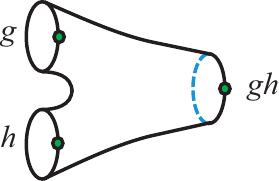}\,.\end{equation*}
Any other principal $G$-bundle over the pair of pants is obtained by composition with principal $G$-bundles of the cylinder; and

\item the disk which is contractile, therefore it has only one principal $G$-bundle over it, which is trivial.
\end{enumerate}
It is important to mention that there are some invertible cobordisms which we put them away. They are the invertible cobordisms resulting by a cylinder construction associated to every diffeomorphism of the circle. We can dismiss these elements since, we can consider the subcategory of topological field theories which send them to the identity maps.

Again it is clearly that every $G$-equivariant topological field theory defines the structure of a $G$-Frobenius algebra. But conversely, we start with the assignation of a linear application to every elemental component as in the following table,
\bena
\begin{tabular}{|c|c|}
  \hline
  \hline
  % after \\: \hline or \cline{col1-col2} \cline{col3-col4} ...
   Component & Linear object \\
   \hline\hline
  \includegraphics[scale=0.7]{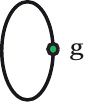} &  $A_g$\\\hline
  \includegraphics[scale=0.7]{f10.pdf}&  $m_{g,h}:A_g\otimes A_h\To A_{gh}$\\\hline
   \includegraphics[scale=0.7]{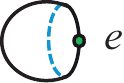}&  $u:\k\To A_e$\\\hline
   \includegraphics[scale=0.7]{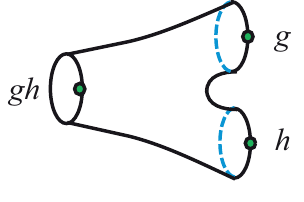}&  $\Delta_{g,h}:A_{gh}\To A_g\otimes A_h$\\\hline
   \includegraphics[scale=0.7]{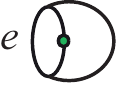}&  $\varepsilon:A_e\To\k$\\\hline
   \includegraphics[scale=0.7]{f11.pdf}&  $\alpha_k:A_g\To A_{kgk^{-1}}$\\
   \hline
  \hline
\end{tabular}
\eena
with $e\in G$ the identity element.
Since a cobordisms could be represented by different decomposition in elemental components, then every decomposition has associated a linear map. We have to prove that all these linear maps are the same. We should first check if the association of the linear maps of the last table is well defined. The following two cases are of relevance:
\begin{enumerate}
\item the first consists on the invariance under the difeomorphisms over the cylinder. We know, by the theory of mapping class group, that they are generated by the Dehn twist given by $(e^{i\theta},t)\Tot (e^{i(\theta+t2\pi)},t)$. This is exactly that the action $\alpha_g$ is trivial when we restrict to the component $A_g$. We exemplified this in the following picture,
    \bena
    \includegraphics{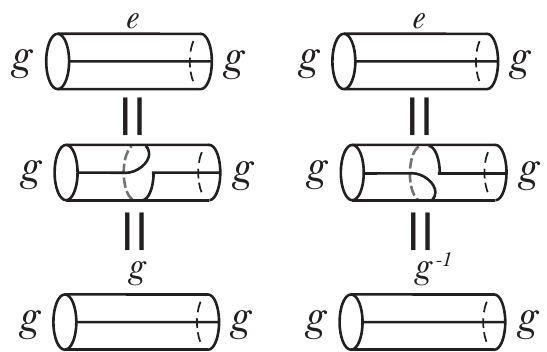}\,;
    \eena
\item for the pair of pants the maps $m_{g,h}:A_g\otimes A_h\To A_{gh}$ depends of the base point that we take on the outgoing boundary  circle,
$$\includegraphics{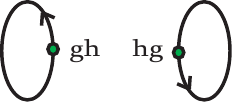}\,;$$
each of the cases are just choosing an ordering of the incoming boundaries and are related by the conjugation
$$\alpha_{h}:A_{gh}\To A_{hg},$$
so the consistency for us to have a well-defined assignment is that
\begin{equation}\label{multip}
m_{h,g}(\psi_2\otimes\psi_1)=\alpha_h\bigl(m_{g,h}(\psi_1\otimes\psi_2)\bigr)\,.
\end{equation}
This is just the calculation
\begin{equation*}
\begin{split}
\alpha_h\bigl(m_{g,h}(\psi_1\otimes\psi_2)\bigr)&=m_{hgh^{-1},h}\bigl(\alpha_h(\psi_1)\otimes\alpha_h(\psi_2)\bigr)\\
&=m_{hgh^{-1},h}\bigl(\alpha_h(\psi_1)\otimes(\psi_2)\bigr)=m_{h,g}(\psi_2\otimes\psi_1):=\psi_2\psi_1,
\end{split}
\end{equation*}
where we use that $G$ acts by algebra automorphisms, the twisted commutativity of the product and the property of the item 1.
\end{enumerate}

Therefore, the linear maps associated to the elemental components are well defined. We pass through the proof that for every cobordism and every of its decomposition in elemental components their associated linear maps coincide. Let $M$ be a cobordism, the set of all its decompositions can be modeled by the space of real smooth functions $f:M\To\RR$. The study of this space is called \emph{Cerf theory}, see \cite{a11}. Every decomposition can be modeled by an \emph{excellent} function, i.e. a function where all the critical points are of Morse type and all the critical values are distinct. For a cobordism $M$, we take an excellent function with critical points $x_1,...,x_k\in M$ and critical values $c_1,...,c_k\in M$ with $c_j=f(x_j)$. For the sequence of values $t_0,t_1,...,t_k$ with
\bena
0=t_0<c_1<t_1<c_2<t_2<...<c_k<t_k=1\,,
\eena
the pre-image
 $S_t=f^{-1}(t)$ is a disjoint union of circles, and $M_i=f^{-1}([t_{i-1},t_i])$ is a cobordism from $S_{t_{i-1}}$ to $S_{t_i}$. This cobordism is a disjoint union of cylinder together with one pair of pants or one disc. Every decomposition of $M$ in elemental components is given by an excellent function. We can take a path that connect every two excellent functions. Moreover, every point of this path is an excellent function except for a finite set of points given by two cases:
\begin{enumerate}
\item\label{d1} we have one point of birth, i.e. locally of the form
    \bena -x_1^2-\cdots -x_i^2+x_{i+1}^2+\cdots x_{n-1}^2+x_n^2;\eena
\item\label{d2} two critical points has the same critical value and all are of Morse type. All of them are distinct except this two.
\end{enumerate}
Thus for the proof of theorem it remain to solve these two cases.

For the bird points, of the form \eqref{d1}, the only possibilities are the corresponding to the
unit and counit (or trace) of $A$. We depict them in the following
pictures,
$$\includegraphics{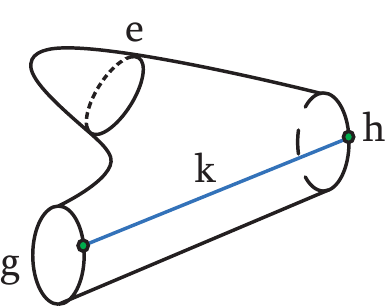}\hspace{2cm}\includegraphics{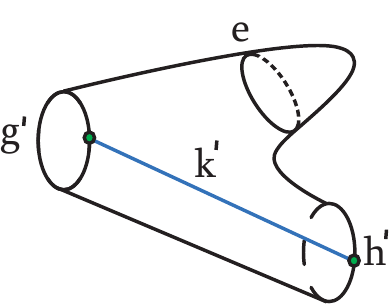}$$
with $e\in G$ the identity, $h=kgk^{-1}$ and $h^\prime=k^\prime
g^\prime {k^\prime}^{-1}$.

For elements of the form \eqref{d2}, let $C_i$ be the number of critical points of index $i$ of a Morse function. Morse theory provide the following relation with the Euler characteristic
 \bena
 \chi=\sum_i(-1)^iC_i\,.
 \eena
 In dimension 2, the cobordism associated to the two critical points, has Euler characteristic inside $\{-2,0,2\}$. For $\chi=2$ and $\chi=0$ we have the cases of the sphere and the cylinder respectively. We denote by $(m,g,n)$, the 2 dimensional cobordism with $m$ entries, $g$ genus and $n$ exits. Thus the case $\chi=-2$ corresponds to the cases $(1,1,1)$, $(1,0,3)$, $(3,0,1)$ and $(2,0,2)$. These are consider in detail below.

For $(1,1,1)$ we associate holonomies to specific arcs that connect the two critical points. For example we do the following assignation,
\bena\includegraphics{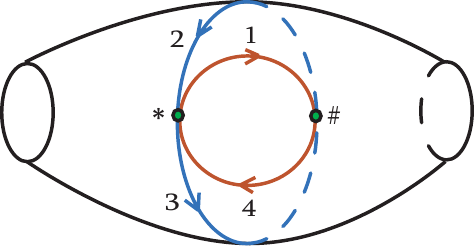}\eena
where the numbers 1,2,3, and 4 stand for elements in $G$. There are two possible cases, these are to identify firstly the critical point $*$ and then $\#$. Thus, there are two possible decompositions, which are presented in the following pictures.
\bena\includegraphics{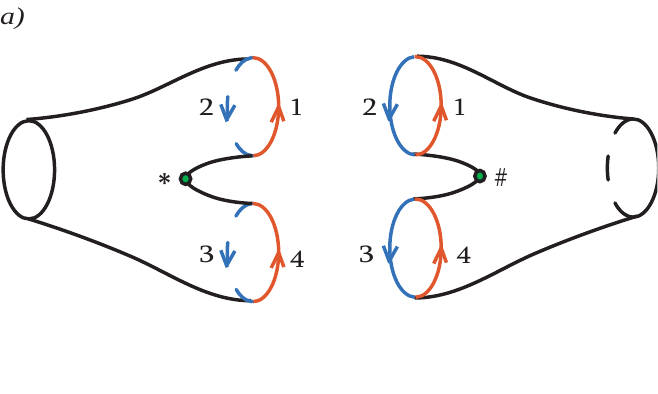}\hspace{1cm}\includegraphics{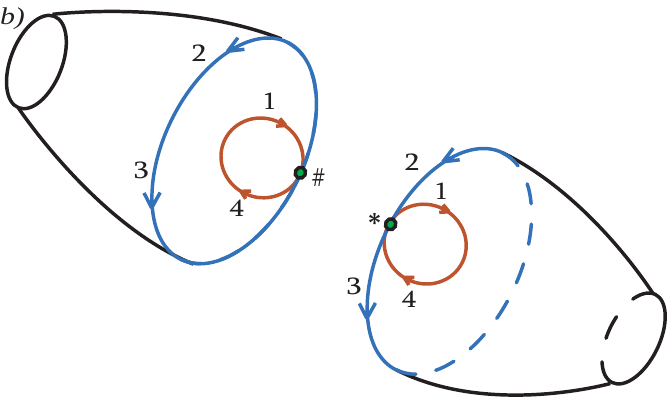}\eena
These two decomposition are related by the following diffeomorphism,
\bena\includegraphics{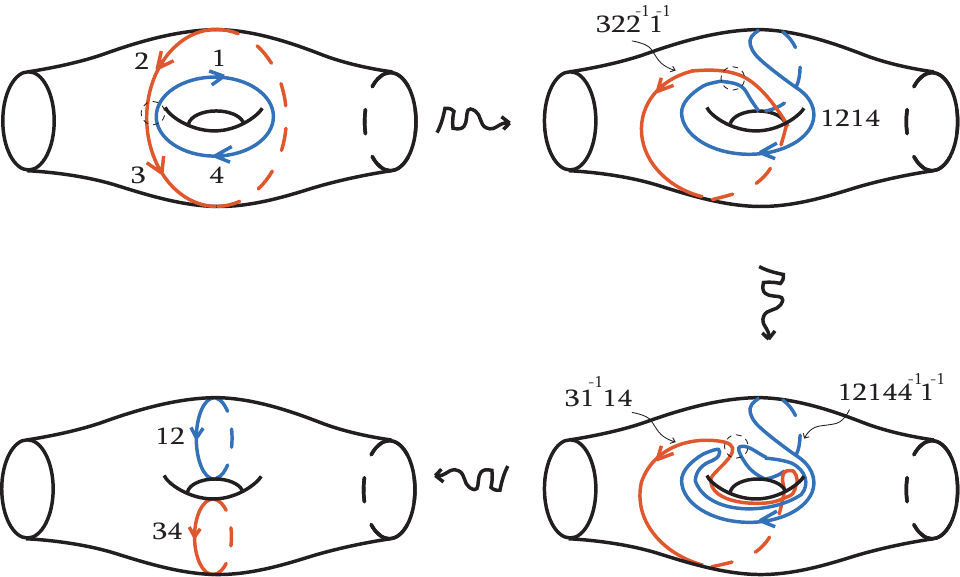}\eena
We will finish this case if we prove the independence between the
following compositions.

\begin{enumerate}
\item[a)]$ A_{1234}\To
A_{12}\otimes A_{34}\simeq A_{21}\otimes A_{43}\To A_{2143}\simeq A_{1432}$
\item[b)]
$ A_{1234}\simeq A_{4123}\To A_{41}\otimes A_{23}\simeq A_{14}\otimes A_{32}\To A_{1432}$
\end{enumerate}
For a) we have the assignation
\begin{equation*}
\begin{split}
\phi\Tot\sum\phi\xi_i^{4^{-1}3^{-1}}\otimes\xi_i^{34}\stackrel{\sim}{\Tot}\sum\alpha_2\bigl(\phi\xi_i^{4^{-1}3^{-1}}\bigr)\otimes\alpha_4\bigl(\xi_i^{34}\bigr)\Tot&\sum\alpha_2
\bigl(\phi\xi_i^{4^{-1}3^{-1}}\bigr)\alpha_4\bigl(\xi_i^{34}\bigr)\stackrel{\sim}{\Tot}\\
\sum\alpha_{2^{-1}}\left(\alpha_2\bigl(\phi\xi_i^{4^{-1}3^{-1}}\bigr)\alpha_4(\xi_i^{34})\right)&=\sum \phi\xi_i^{4^{-1}3^{-1}}\alpha_{2^{-1}4}\bigl(\xi_i^{34}\bigr)
\end{split}
\end{equation*}
and b) is

\begin{alignat*}{2}
\phi\stackrel{\sim}{\Tot}\alpha_4(\phi)\Tot\sum\alpha_4(\phi)\xi_i^{3^{-1}2^{-1}}\otimes\xi_i^{23}&\stackrel{\sim}{\Tot}\sum\alpha_{4^{-1}}\left(\alpha_4(\phi)\xi_i^{3^{-1}2^{-1}}\right)
\otimes\alpha_3\bigl(\xi_i^{23}\bigr)\Tot\\
\sum\phi\alpha_{4^{-1}}\bigl(\xi_i^{3^{-1}2^{-1}}\bigr)\alpha_{3}\bigl(\xi^{23}\bigr)&=\sum\phi\xi_i^{4^{-1}3^{-1}2^{-1}4}\alpha_{34}\bigl(\xi^{4^{-1}234}\bigr)\\
&=\sum\phi\alpha_{4^{-1}3^{-1}2^{-1}4}\bigl(\xi_i^{34}\bigr)\xi^{4^{-1}3^{-1}}\\
&=\sum\phi\alpha_{4^{-1}3^{-1}}\left(\alpha_{2^{-1}4}\bigl(\xi_i^{34}\bigr)\xi^{4^{-1}3^{-1}}\right)\\
&=\sum\phi\xi_i^{4^{-1}3^{-1}}\alpha_{2^{-1}4}\bigl(\xi_i^{34}\bigr)
\end{alignat*}
where we use firstly the axiom \eqref{axiom4} of the definition of a $G$-Frobenius algebra and then the property \eqref{multip}. Note also that, as a consequence of the non-degeneration of the trace, we have that the Euler element is invariant over the action of $G$ and moreover, we can assume that $$\alpha_h\bigl(\xi_i^g\bigr)=\xi_i^{hgh^{-1}}$$ for basis
$\bigl\{\xi_i^g\bigr\}$ and $\bigl\{\xi_i^{hgh^{-1}}\bigr\}$ of $ A_g$ and $ A_{hgh^{-1}}$ respectively.

For the sphere with four holes, we start with the case $(2,0,2)$, i.e. two incoming circles, genus zero and two output circles. We assign the following holonomies,
$$\includegraphics{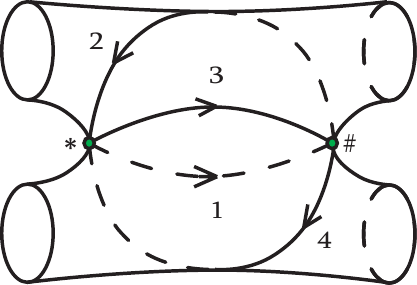}$$
We first consider the decomposition given by a vertical cut. This produce two possible maps according to the order of the critical points,
\begin{enumerate}
\item[a)]$ A_{12}\otimes A_{34}\To A_{1234}\simeq A_{2341}\To A_{23}\otimes A_{41}\simeq A_{32}\otimes A_{14}$
\item[b)]$ A_{12}\otimes A_{34}\simeq A_{21}\otimes A_{43}\To A_{2143}\simeq A_{3214}\To A_{32}\otimes A_{14}$
\end{enumerate}
For a)
\begin{alignat*}{2}
\phi\otimes\phi^\prime\Tot\phi\phi^\prime\stackrel{\sim}{\Tot}\alpha_{1^{-1}}(\phi\phi^\prime)\Tot\sum\alpha_{1^{-1}}(\phi\phi^\prime)\xi_i^{1^{-1}4^{-1}}\otimes\xi_i^{41}&\hspace{5cm}\\
\stackrel{\sim}{\Tot}\sum\alpha_{3}\left(\alpha_{1^{-1}}(\phi\phi^\prime)\xi_i^{1^{-1}4^{-1}}\right)\otimes\alpha_1\bigl(\xi_i^{41}\bigr)=
\sum\alpha_{3}&\left(\alpha_{1^{-1}}(\phi\phi^\prime)\xi_i^{1^{-1}4^{-1}}\right)\otimes\xi_i^{14}
\end{alignat*}
and b)
\begin{alignat*}{2}
\phi\otimes\phi^\prime\stackrel{\sim}{\Tot}\alpha_{1^{-1}}(\phi)\otimes\alpha_{3^{-1}}(\phi^\prime)\stackrel{\sim}{\Tot}\alpha_{1^{-1}}(\phi)\alpha_{3^{-1}}(\phi^\prime)&
\Tot\alpha_3\left(\alpha_{1^{-1}}(\phi)\alpha_{3^{-1}}(\phi^\prime)\right)=\alpha_{31^{-1}}(\phi)\phi^\prime\\
\Tot\sum\alpha_{31^{-1}}(\phi)\phi^\prime\xi_i^{4^{-1}1^{-1}}\otimes\xi_i^{14}&=\sum\alpha_{31^{-1}}(\phi)\alpha_1\left(\alpha_{-1}(\phi^\prime)\xi_i^{1^{-1}4^{-1}}\right)\otimes\xi_i^{14}\\
&=\sum\alpha_{31^{-1}}(\phi)\alpha_3\left(\alpha_{-1}(\phi^\prime)\xi_i^{1^{-1}4^{-1}}\right)\otimes\xi_i^{14}\\
&=\sum\alpha_{3}\left(\alpha_{1^{-1}}(\phi\phi^\prime)\xi_i^{1^{-1}4^{-1}}\right)\otimes\xi_i^{14}
\end{alignat*}
For the horizontal cut, which we depicted in the following two figures,
$$\includegraphics{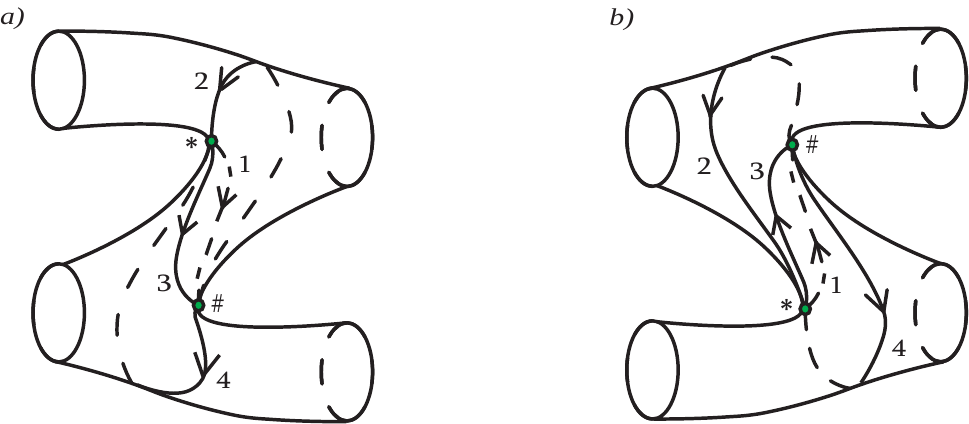}$$
This produces the following compositions
$$ A_{12}\otimes A_{34}\To A_{12}\otimes A_{31^{-1}}\otimes A_{14}\simeq A_{21}\otimes A_{1^{-1}3}\otimes A_{14}\To A_{23}\otimes A_{14}\simeq A_{32}\otimes A_{14}$$
$$\textrm{and}$$
$$ A_{12}\otimes A_{34}\simeq A_{21}\otimes A_{34}\To A_{23}\otimes A_{3^{-1}1}\otimes A_{34}\simeq A_{32}\otimes A_{13^{-1}}\otimes A_{34}\To A_{32}\otimes A_{14}$$
For the first, we have the calculation
\begin{alignat*}{2}
\phi\otimes\phi'\Tot \sum\phi\otimes\phi'\xi_i^{4^{-1}1^{-1}}\otimes \xi_i^{14}\stackrel{\sim}{\Tot}
\sum\alpha_{1^{-1}}(\phi)\otimes \alpha_{1^{-1}}\left( \phi'\xi_i^{4^{-1}1^{-1}}\right)\otimes \xi_i^{14}&\\
\Tot \sum\alpha_{1^{-1}}\left(\phi'\xi_i^{4^{-1}1^{-1}}\right)\otimes\xi_i^{14} \stackrel{\sim}{\Tot}\sum\alpha_3\left(\alpha_1^{-1}(\phi)\alpha_{1^{-1}}\bigl(\phi'\xi_i^{4^{-1}1^{-1}}\bigr)\right)\otimes \xi_i^{14}&\\
=\sum\alpha_3\left( \alpha_{1^{-1}}(\phi\phi')\xi_i^{1^{-1}4^{-1}}\right)\otimes \xi_i^{14}\,,&
\end{alignat*}
and for the second
\begin{alignat*}{2}
\phi\otimes\phi'\stackrel{\sim}{\Tot}\alpha_2(\phi)\otimes \phi'\Tot& \sum \alpha_2(\phi)\xi_i^{1^{-1}3}\otimes \xi_i^{3^{-1}1}\otimes \phi'=\sum \xi_i^{23}\otimes \xi_i^{3^{-1}2^{-1}}\alpha_2(\phi)\otimes \phi'\\
\stackrel{\sim}{\Tot}\sum\alpha_3\bigl(\xi_i^{23}\bigr)\otimes\alpha_3\left(\xi_i^{3^{-1}2^{-1}}\alpha_2(\phi)\right)\otimes \phi'&=\sum\xi_i^{32}\otimes \xi_i^{2^{-1}3^{-1}}\alpha_{32}(\phi)\otimes\phi'=\sum\alpha_{32}(\phi)\phi'\xi_i^{4^{-1}1^{-1}}\otimes\xi_i^{14}\,.
\end{alignat*}
So, it rest to write the following identities
\bena
\alpha_{31^{-1}}(\phi)\phi'\xi_i^{4^{-1}1^{-1}}=\alpha_{32}(\phi)\phi'\xi_i^{4^{-1}1^{-1}}=\alpha_{31^{-1}}\left(\phi\alpha_{13^{-1}}\bigl(\phi'\xi_i^{4^{-1}1^{-1}}\bigr)\right)
\eena
\bena
=\alpha_3\left(\alpha_{1^{-1}}\bigl(\phi\phi'\xi_i^{4^{-1}1^{-1}}\bigr)\right)=\alpha_3\left(\alpha_{1^{-1}}(\phi\phi')\xi_i^{1^{-1}4^{-1}}\right)\,,
\eena
where we use the identity \eqref{coproducto}.
%$$\includegraphics{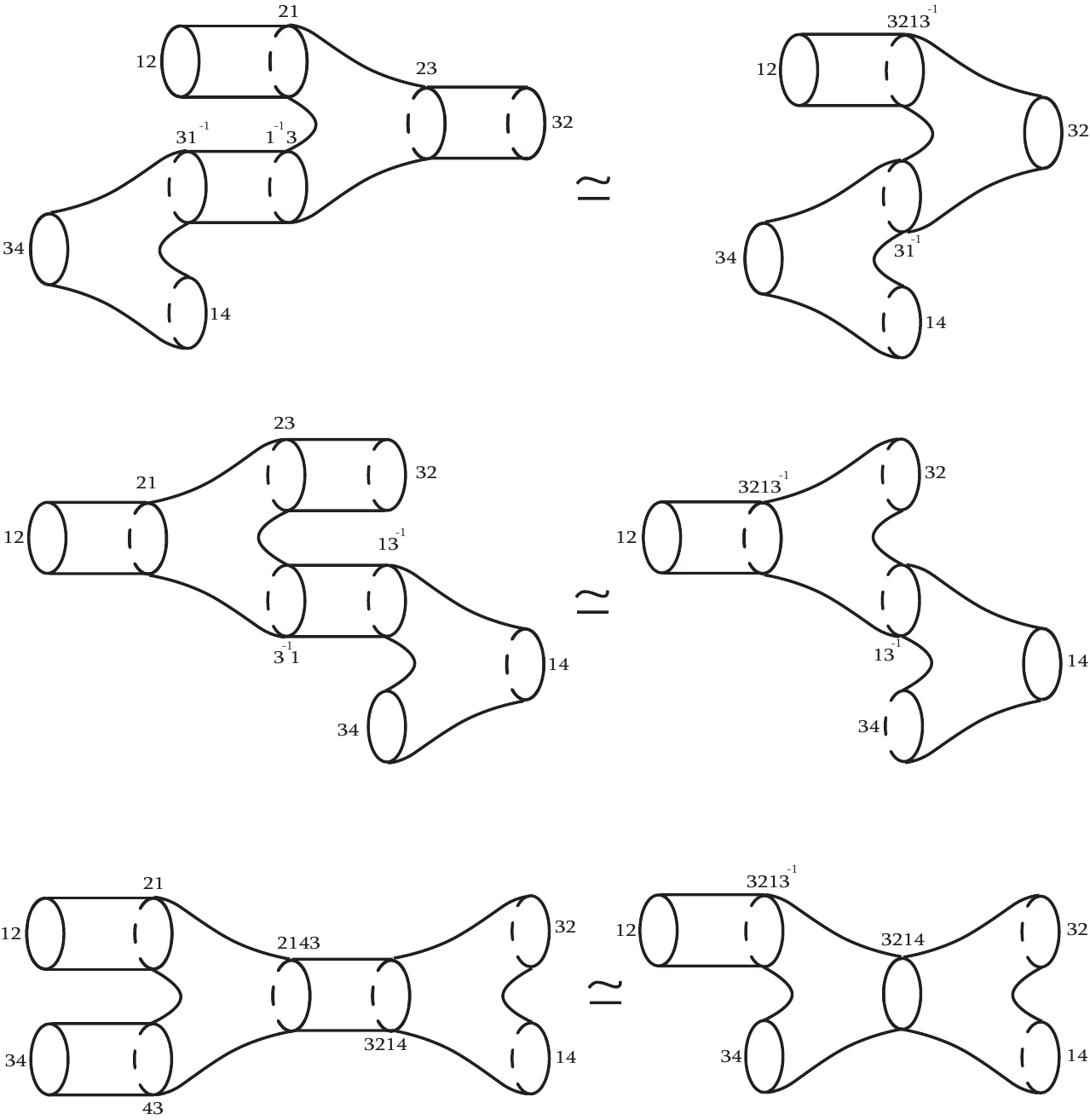}$$

We shall do one more case, this is for $(3,0,1)$. The possibilities
are

$$\includegraphics{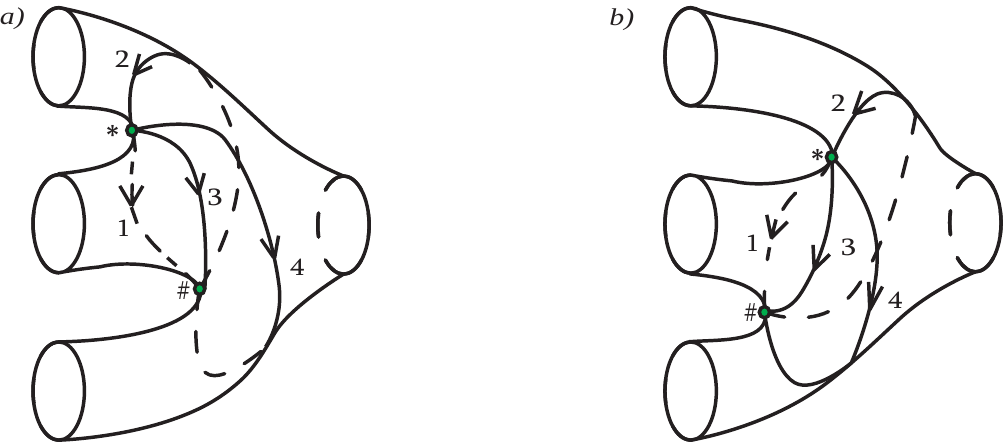}$$

with the maps
\begin{enumerate}
\item[a)]
$ A_{12}\otimes A_{31^{-1}}\otimes A_{43^{-1}}\To A_{1231^{-1}}\otimes A_{43^{-1}}\simeq A_{23}\otimes A_{3^{-1}4}\To A_{24}$\\
$ A_{12}\otimes A_{31^{-1}}\otimes A_{43^{-1}}\To A_{12}\otimes A_{31^{-1}43^{-1}}\simeq A_{21}\otimes A_{1^{-1}4}\To A_{24}$
\item[b)] $ A_{12}\otimes A_{31^{-1}}\otimes A_{43^{-1}}\simeq A_{21}\otimes A_{1^{-1}3}\otimes A_{3^{-1}4}\To A_{21}\otimes A_{1^{-1}4}\To A_{24}$\\
$ A_{12}\otimes A_{31^{-1}}\otimes A_{43^{-1}}\simeq A_{21}\otimes A_{1^{-1}3}\otimes A_{3^{-1}4}\To A_{23}\otimes A_{3^{-1}4}\To A_{24}$
\end{enumerate}
The case $(1,0,3)$ is the associated to the coproduct.

\bibliographystyle{amsalpha}
\bibliography{biblio1}
\end{document}